\newtheorem{theorem}{Theorem}[section]
\newtheorem{lemma}[theorem]{Lemma}
\newtheorem{proposition}[theorem]{Proposition}
\newtheorem{corollary}[theorem]{Corollary}
\theoremstyle{definition}
\newtheorem{definition}[theorem]{Definition}
\numberwithin{equation}{section}
\def\N{{\mathbb N}}
\def\R{{\mathbb R}}
\def\C{{\mathbb C}}
\def\seg{{\ \ \ \  \ \  }}
\def\sem{{\ \ \ \ \ \  }}
\DeclareMathOperator{\rea}{Re}
\title[The Bishop-Phelps-Bollob\'{a}s property]{The Bishop-Phelps-Bollob{\'a}s property  for numerical \\ radius   of  operators on  $L_1 (\mu)$}
\author[M.D. Acosta]{Mar\'{\i}a D. Acosta}
\address{Universidad de Granada, Facultad de Ciencias,
Departamento de An\'{a}lisis Matem\'{a}tico, 18071 Granada, Spain}
\email{dacosta@ugr.es}
\author[M. Fakhar]{Majid  Fakhar}
\address{Department of Mathematics, University of Isfahan, Isfahan, Iran; and
	School of Mathematics, Institute for Research in Fundamental Sciences (IPM),
	P.O. Box 19395-5746, Tehran, Iran}
\email{Email: fakhar@sci.ui.ac.ir}
\author[M. Soleimani]{Maryam Soleimani-Mourchehkhorti}
\address{Department of Mathematics, University of Isfahan, Isfahan, Iran, 81745-163}
\email{m.soleymanei@sci.ui.ac.ir}
\thanks{	The  first  author was  supported  by MTM2012-31755, MTM2015-65020-P,  Junta de Andaluc\'{\i}a
	P09-FQM--4911 and FQM--185.  The second author was supported by a grant from IPM (No. 94550414).}
\subjclass[2010]{Primary 46B04,  Secondary  46B25, 47B99}
\keywords{ Banach space,  Bishop-Phelps-Bollob\'{a}s theorem,   numerical radius attaining operator,  Bishop-Phelps-Bollob{\'a}s property.}
\begin{document}

\dedicatory{\it  Dedicated to the memory of Joe Diestel}

\begin{abstract}
	In this paper, we introduce the notion of the Bishop-Phelps-Bollob{\'a}s
	property for numerical radius (BPBp-$\nu$) for a subclass of  the space of  bounded linear operators. Then, we show that certain subspaces of $\mathcal{L}(L_1(\mu))$ have the  BPBp-$\nu$ for every finite measure $\mu $. As a consequence we deduce that
	the subspaces of finite-rank operators, compact operators and weakly compact operators on
	$L_1(\mu)$ have the BPBp-$\nu$.
\end{abstract}

\maketitle

\baselineskip=.65cm

\section{Introduction}
In this paper, we provide  a version of Bishop-Phelps-Bollob{\'a}s theorem for numerical radius  for operators. To recall such result  we introduce some notation. For a Banach space $X$,  $B_X$ and
$S_X$ will be the closed unit ball and the unit sphere of $X$, respectively.  We will denote by
$X^*$ the topological dual of $X$ and by  $\mathcal{L}(X)$ the space of  bounded linear
operators on $X$  endowed with the operator norm.  The symbols $\mathcal{F}(X)$,
$\mathcal{K}(X)$ and $\mathcal{WC}(X)$ denote the spaces of finite-rank operators, compact
operators and weakly compact operators on $X$, respectively. It is well known that
$\mathcal{F}(X) \subset \mathcal{K}(X) \subset \mathcal{WC}(X)$. Throughout this paper the normed
spaces will be either real or complex.

Bishop-Phelps-Bollob{\'a}s theorem states that for any Banach space $X$, given $0 < \varepsilon < 1$,
and $(x,x^*) \in B_X \times  S_{X^*}$ such that $\vert  x^*(x) -1\vert <
\frac{\varepsilon^2}{2}$, there is a pair  $(y,y^*) \in S_X \times  S_{X^*}$ satisfying
$$
\Vert y-x \Vert < \varepsilon, \  \Vert y^*-x^* \Vert < \varepsilon \ \  \  \text{and}  \ \  \
y^*(y)=1
$$
(see for instance \cite{Bol},  \cite[Theorem 16.1]{Bo-Du} or \cite[Corollary 2.4]{CKMMR}).

After some interesting papers about denseness of the set of norm attaining operators, in 2008 it
was initiated the study of versions of Bishop-Phelps-Bollob{\'a}s Theorem for operators \cite{AAGM}.
More recently it was considered the problem of obtaining versions of such results for numerical
radius of operators (see \cite[Definition 1.2]{GuiKo}). We just mention that the numerical radius of an
operator is a continuous semi-norm in the space $\mathcal{L}(X)$ for every Banach space $X$.

Guirao and Kozhushkina proved that the spaces $c_0$ and $\ell_1$ satisfy the Bishop-Phelps-Bollob\'{a}s property for numerical radius (BPBp-$\nu$) in the real case as well as in the
complex case \cite{GuiKo}. Falc\'{o} showed the same result for $L_1 (\R)$  in the real case
\cite[Theorem 9]{Fal}.   Choi, Kim, Lee and Mart\'{i}n extended the previous result to $L_1
(\mu)$ for any positive measure $\mu$ \cite[Theorem 9]{KLM}. Avil\'{e}s, Guirao and Rodr\'{i}guez
provided sufficient conditions on a compact Hausdorff space $K$ in order that $C(K)$ has the
BPBp-$\nu$ in the real case \cite[Theorem 2.2]{AGR}. For
instance, a metrizable space $K$ satisfies  the previous condition \cite[Theorem 3.2]{AGR}. It is
an open problem whether or not such result is satisfied for any compact Hausdorff space $K$ in
the real case. In the complex case there are no results until now for $C(K)$ spaces.

In this paper,  motivated by Definition 1.2 of \cite{GuiKo}, we introduce   the notion of  the
BPBp-$\nu$ for  subspaces of the space of bounded linear operators.
A Banach space $X$  satisfies the BPBp-$\nu$,     introduced in \cite{GuiKo},  if and only if the space  $\mathcal{M}= \mathcal{L}(X)$ satisfies the BPBp-$\nu$ (Definition \ref{d-BPBP-num-radius}). Then,   we give some sufficient conditions on   a  subspace  $ \mathcal{M} $ of $ \mathcal{L}( L_1 (\mu) )$ to satisfy  the  BPBp-$\nu$, for any finite measure $\mu$.
More precisely,  we show that  $ \mathcal{M} $ has the BPBp-$\nu$ if $ \mathcal{M} $    contains   the space  of finite-rank operators on $ L_1 (\mu)$,   is  contained in the  class of representable operators   on $L_1 (\mu)$ (see  Definition \ref{d-R})  and  $T_{\vert A} \in  \mathcal{M} $
for every $T \in  \mathcal{M} $  and any measurable set $A$, where  $T_{\vert A}$ is the operator on $L_1 (\mu)$ given by $T _{\vert A} (f)= T( f
\chi_A)$ for all $f \in L_1 (\mu)$.  As a consequence of the main result we obtain that  for any $\sigma$-finite measure $\mu$,  the spaces of finite-rank  operators,  compact operators and weakly
compact operators on $L_1 (\mu)$  have   the BPBp-$\nu$.  The results are valid in the
real as well as in the complex case.

\section{Bishop-Phelps-Bollob\'{a}s theorem for numerical radius for some  classes of  operators on
	$L_1(\mu)$}

If $X$ is a Banach space and $T \in \mathcal{L}(X)$, we recall that the {\it numerical radius }
of $T$, $\nu(T)$, is defined by
$$
\nu (T)= \sup \bigl\{ \vert x^* (T(x)) \vert : x\in S_X, x^* \in S_{X^*}, x^* (x)=1\bigr\}.
$$
In general the numerical radius is a semi-norm on $\mathcal{L}(X)$
satisfying $\nu (T) \le \Vert T \Vert$ for each  $T \in \mathcal{L}(X)$. The numerical index of $X$,
$n(X)$ is defined by
$$
n(X)= \inf \{ \nu(T) :  T \in  S_{\mathcal{L} (X)}\}.
$$
Hence, $n(X)$ is the greatest constant $t$ such that  $t\Vert T \Vert \le \nu (T)$ for each $T \in
\mathcal{L}(X)$. It is  always satisfied that $0 \le n(X) \le 1$ and, in case that $n(X)=1$, it is said that
$X$ has {\it numerical index equal to $1$}.  In such case it is satisfied that $\nu (T)= \Vert T \Vert $ for
each $T \in \mathcal{L}(X)$.
It is well known that the spaces $L_1 (\mu)$ and $C(K)$ have numerical index
equal to $1$ for any measure $\mu$ and any compact Hausdorff space $K$ \cite[Theorem 2.2]{DMPW}.

Guirao and Kozhushkina  \cite{GuiKo} introduced the definition of the BPBp-$\nu$. We will use a  little  different concept  by admitting subclasses of the space of bounded linear operators on a Banach space $X$.

\begin{definition}
	\label{d-BPBP-num-radius}
	Let $X$ be a Banach space and $\mathcal{M}$  a  subspace of $ \mathcal{L}(X)$. We will say that
	$\mathcal{M}$ has the {\it Bishop-Phelps-Bollob\'{a}s property for numerical radius} (BPBp-$\nu$) if for every	$0 <\varepsilon< 1$, there is $\eta(\varepsilon)>0$ such that whenever $S\in \mathcal{M}$, $\nu (S)=1$,
	$x_0\in S_X$ and  $x_0^*\in S_{X^*}$  are such that  $x_0^*(x_0)=1$ and   $ \vert x_0^*(S(x_0))
	\vert
	> 1  - \eta(\varepsilon)$, there are $T\in \mathcal{M} $,  $x_1 \in S_X$ and $x_1^* \in S_{X^*}$
	such that
	\begin{enumerate}
		\item[i)]   $x_1^*(x_1) = 1$,
		\item[ii)] $ \vert  x_1^*(T(x_1)) \vert =  \nu  (T)=1$,
		\item[iii)]  $  \nu  (T-S)    < \varepsilon$, $ \Vert  x_1-x_0 \Vert <\varepsilon   $ and
		$\Vert  x_1^*-x_0^* \Vert <\varepsilon $.
	\end{enumerate}
\end{definition}

Let us notice  that for spaces with numerical index equal to one, Definition \ref{d-BPBP-num-radius} can be reformulated by using the usual norm of the space $\mathcal{L}(X)$  instead of the numerical radius.

The following simple technical lemmas will be useful.  Next lemma is a straightforward consequence of \cite[Lemma 3.3]{AAGM}.

\begin{lemma}
	\label{lema-compl-z-beta}
	Assume that $\{ z_k: k \in \N \} \subset \{ z \in \C: \vert z \vert \le 1\}$ and $\{\beta_k: k \in \N \}
	\subset \C$  satisfies that  $ \sum_{ k=1}^\infty  \vert \beta_k \vert  = 1 $. If $ 0 < \varepsilon < 1 $ and
	$ \rea  ( \sum_{ k=1}^\infty \beta_k z_k ) > 1 - {\varepsilon}^2  $,
	then
	$$
	\sum_{k \in B} |\beta_k|  > 1 - \varepsilon,
	$$
	where $ B = \{ k \in \N :  \rea (\beta_k z_k) > ( 1 - \varepsilon  ) | \beta_k| \} $.
\end{lemma}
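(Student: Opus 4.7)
The plan is to split the index set according to the dichotomy that defines $B$ and bound the real parts of $\beta_k z_k$ separately on each piece. Write $A = \N \setminus B$, so that by definition of $B$ we have $\rea(\beta_k z_k) \le (1-\varepsilon)|\beta_k|$ for every $k \in A$. On $B$ we have no better bound than the trivial one, but since $|z_k| \le 1$ we still get $\rea(\beta_k z_k) \le |\beta_k z_k| \le |\beta_k|$.

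Summing these two bounds and using the hypothesis $\rea\bigl(\sum_k \beta_k z_k\bigr) > 1 - \varepsilon^2$, I would obtain
\[
1 - \varepsilon^2 \;<\; \sum_{k \in B} |\beta_k| \;+\; (1-\varepsilon)\sum_{k \in A} |\beta_k|.
\]
Setting $s := \sum_{k \in B} |\beta_k|$, the normalization $\sum_k |\beta_k| = 1$ gives $\sum_{k \in A}|\beta_k| = 1-s$, so the above inequality collapses to $1 - \varepsilon^2 < 1 - \varepsilon + \varepsilon s$. Dividing by $\varepsilon > 0$ yields $s > 1 - \varepsilon$, which is exactly the desired conclusion.

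There is no real obstacle here; the entire argument is a one-line convexity/averaging computation, which is presumably why the authors describe it as a straightforward consequence of \cite[Lemma 3.3]{AAGM}. The only point that requires a small amount of care is the initial bound $\rea(\beta_k z_k) \le |\beta_k|$ on $B$, which uses the assumption $|z_k|\le 1$ (this is where the hypothesis on the $z_k$'s enters), and the observation that interchanging the real part with the infinite sum is justified by the absolute convergence guaranteed by $\sum |\beta_k| = 1$ and $|z_k|\le 1$.
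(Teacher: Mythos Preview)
Your argument is correct and is precisely the standard averaging computation the paper has in mind; the paper itself gives no proof, simply citing \cite[Lemma~3.3]{AAGM}, and your splitting into $B$ and $\N\setminus B$ with the bounds $\rea(\beta_k z_k)\le|\beta_k|$ and $\rea(\beta_k z_k)\le(1-\varepsilon)|\beta_k|$ is exactly that lemma's proof specialized to this setting.
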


Next result is a generalization of Lemma \ref{lema-compl-z-beta} to
$L_1 (\mu)$. Also it  extends \cite[Lemma 2.3]{GuiKo} where the authors state  the analogous result  for the sequence space $\ell_1$.

\begin{lemma}
	\label{re-fg-big}
	Let  $ ( \Omega , \Sigma , \mu)  $  be a  measure space.  Assume that $ 0 < \varepsilon < 1$,  $f \in
	B_{L_1(\mu)} $ and  $ g \in B_{L_\infty (\mu)} $ are such that
	$$
	1 - {\varepsilon}^2 < \rea  \int_\Omega fg\ d
	\mu.
	$$
	Then the  set $C$  given by
	$$
	C = \{ t \in \Omega :  \rea  f(t) g(t) > ( 1 - \varepsilon ) \vert f(t) \vert \},
	$$
	satisfies that
	$$
	\rea  \int_C  fg \ d \mu >  1 - \varepsilon.
	$$
\end{lemma}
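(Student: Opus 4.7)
The plan is to split the integral over $\Omega$ into the contributions from $C$ and its complement $\Omega\setminus C$, use the defining inequality of $C$ on the bad set, and then combine with the hypothesis.

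First I would bound the real part on each piece. On $\Omega\setminus C$ the definition of $C$ gives $\rea(f(t)g(t)) \le (1-\varepsilon)|f(t)|$ pointwise, so
\begin{equation*}
\rea \int_{\Omega\setminus C} fg\,d\mu \;\le\; (1-\varepsilon)\int_{\Omega\setminus C} |f|\,d\mu.
\end{equation*}
On $C$ I use only the trivial bound $\rea(f(t)g(t)) \le |f(t)||g(t)| \le |f(t)|$, coming from $\|g\|_\infty \le 1$, which yields
\begin{equation*}
\rea \int_{C} fg\,d\mu \;\le\; \int_{C} |f|\,d\mu.
\end{equation*}

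Next I would add these two estimates and plug in the hypothesis, together with $\|f\|_1 \le 1$:
\begin{equation*}
1-\varepsilon^2 \;<\; \rea \int_\Omega fg\,d\mu \;\le\; \int_C |f|\,d\mu + (1-\varepsilon)\int_{\Omega\setminus C}|f|\,d\mu \;=\; \|f\|_1 - \varepsilon\int_{\Omega\setminus C}|f|\,d\mu \;\le\; 1 - \varepsilon\int_{\Omega\setminus C}|f|\,d\mu.
\end{equation*}
Rearranging gives $\int_{\Omega\setminus C}|f|\,d\mu < \varepsilon$, the key quantitative smallness of the bad set.

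Finally, I would deduce the claim by writing $\rea\int_C fg\,d\mu = \rea\int_\Omega fg\,d\mu - \rea\int_{\Omega\setminus C} fg\,d\mu$, applying the hypothesis to the first term and the pointwise bound $\rea(fg)\le(1-\varepsilon)|f|$ to the second, giving
\begin{equation*}
\rea \int_C fg\,d\mu \;>\; (1-\varepsilon^2) - (1-\varepsilon)\int_{\Omega\setminus C}|f|\,d\mu \;>\; (1-\varepsilon^2) - (1-\varepsilon)\varepsilon \;=\; 1-\varepsilon.
\end{equation*}
No step is really an obstacle; the only delicate point is making the right choice of bounds on the two sets so that the $\varepsilon^2$ cancels against the $(1-\varepsilon)\varepsilon$ correctly to produce exactly $1-\varepsilon$ at the end. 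This parallels the $\ell_1$ argument in \cite[Lemma 2.3]{GuiKo}, with pointwise inequalities replacing coordinatewise ones.
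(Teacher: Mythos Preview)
Your proof is correct and uses essentially the same approach as the paper: split $\Omega$ into $C$ and $\Omega\setminus C$, apply the pointwise bound $\rea(fg)\le(1-\varepsilon)|f|$ on the complement and $\rea(fg)\le|f|$ on $C$, and combine with $\|f\|_1\le 1$. The paper's version is slightly more compact---it bypasses your intermediate inequality $\int_{\Omega\setminus C}|f|\,d\mu<\varepsilon$ by writing $\rea\int_C fg\le \varepsilon\,\rea\int_C fg+(1-\varepsilon)\int_C|f|\,d\mu$ and combining directly to get $1-\varepsilon^2<\varepsilon\,\rea\int_C fg+(1-\varepsilon)$---but the ingredients and the final arithmetic are the same.
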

\begin{proof}
	It is clear that the set $C$ is measurable.  By assumption we have
	\begin{align*}
	1 - {\varepsilon}^2 < \rea  \int_\Omega fg \ d\mu
	\le  \rea  \int_C fg \ d\mu   + ( 1 - \varepsilon) \int_{\Omega \setminus C} |f|   \ d\mu\\
	\le  \varepsilon \rea  \int_C fg \ d\mu   + ( 1 - \varepsilon) \biggl( \int_C \vert f \vert  \ d\mu +\int_{\Omega \setminus C} |f|   \ d\mu \biggr)
	&\le  \varepsilon \rea  \int_C fg \ d\mu  + 1 - \varepsilon.
	\end{align*}
	Hence,
	$$
	\rea  \int_C fg  \ d\mu    > 1 - \varepsilon.
	$$
\end{proof}

\begin{lemma}
	\label{lem-z-mod-z}
	Let  $ z $ be a complex number,  $  0 <\varepsilon < 1 $  and  assume that
	$$
	\rea  z > (1 - \varepsilon) \vert  z\vert .
	$$
	Then
	$$
	\vert  z - \vert  z\vert  \vert   < \sqrt{2\varepsilon} \vert  z\vert .
	$$
\end{lemma}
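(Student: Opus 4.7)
The plan is to reduce to the unit circle by factoring out $|z|$ and then use the standard identity $|e^{i\theta}-1|^2 = 2(1-\cos\theta)$.

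First, observe that the case $z = 0$ is vacuous: the hypothesis $\rea z > (1-\varepsilon)|z|$ would read $0 > 0$, which fails. So we may assume $z \neq 0$ and write $z = |z| e^{i\theta}$ for some real $\theta$. Then $\rea z = |z|\cos\theta$, and the hypothesis becomes
\[
|z|\cos\theta > (1-\varepsilon)|z|, \quad \text{i.e.,}\quad \cos\theta > 1 - \varepsilon.
\]

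Next, I would compute directly:
\[
|z - |z||^2 = |z|^2 \bigl| e^{i\theta} - 1 \bigr|^2 = |z|^2 \bigl( (\cos\theta - 1)^2 + \sin^2\theta \bigr) = 2|z|^2 (1 - \cos\theta).
\]
Using the inequality $1 - \cos\theta < \varepsilon$ from the previous step, this gives $|z - |z||^2 < 2\varepsilon |z|^2$, and taking square roots yields the desired bound $|z - |z|| < \sqrt{2\varepsilon}\,|z|$.

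There is essentially no obstacle here; the statement is a purely elementary fact about complex numbers whose proof is just two lines of trigonometry. The only thing to be mildly careful about is handling $z = 0$ (which is excluded by the hypothesis, as noted) and making sure the inequality in the conclusion is strict, which follows because the hypothesis $\cos\theta > 1 - \varepsilon$ is strict.
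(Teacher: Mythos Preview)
Your proof is correct and follows essentially the same approach as the paper's: both compute $|z - |z||^2$ directly and bound it by $2\varepsilon|z|^2$. The only cosmetic difference is that the paper works in Cartesian coordinates $z = x + iy$, bounding $(|z|-x)^2$ and $y^2$ separately, whereas you use polar form and the identity $|e^{i\theta}-1|^2 = 2(1-\cos\theta)$.
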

\begin{proof}
	We write  $z=x+iy$,  where $x,y \in \R$.  Since $ x^2 + y^2 =  \vert z\vert ^2 $ and $ x= \rea z
	> ( 1 - \varepsilon) \vert z\vert $,  we have
	$
	y^2  \leq \vert z\vert ^2 -  (1 - \varepsilon )^2 \vert z\vert ^2  =  ( 2\varepsilon - {\varepsilon}^2 )
	\vert z\vert ^2 $.
	It follows that
	$$
	\bigl\vert z - \vert z\vert   \bigr\vert ^2 =( \vert z\vert -x )^2 + y^2  <   ( \varepsilon \vert z\vert  )^2 + ( 2\varepsilon - {\varepsilon}^2) \vert z\vert ^2 =  2\varepsilon \vert z\vert ^2.
	$$
\end{proof}

We recall the following notion (see for instance \cite[Definition III.3]{DieUh}).

\begin{definition}\label{d-R}
	Let $( \Omega , \Sigma , \mu )$ be a finite measure space and $Y$ a Banach space. An operator $T \in
	\mathcal{L}(L_1(\mu),Y)$ is called {\it Riesz representable} (or simply {\it representable}) if there is $h
	\in L_\infty (\mu, Y)$ such that $T(f)= \int _{\Omega } hf \ d \mu$ for all $f \in L_1 (\mu)$. We say that the function $h$ is   a representation of $T$.
\end{definition}

We will use the following identification.

\begin{proposition} {\rm(\cite[Lemma III.4, p. 62]{DieUh})}
	\label{iden-ope}
	Let  $( \Omega , \Sigma , \mu )$ be a finite measure space and $Y$ be a Banach space. There is a
	linear  isometry  $\Phi$  from  the space  $\mathcal{R}$ of representable operators in
	$\mathcal{L}(L_1(\mu),Y)$ into  $L_\infty ( \mu ,  Y) $ such that if $T \in \mathcal{R}$ and $\Phi(T)=h$, then
	it is satisfied that
	$$
	T(f)= \int _{\Omega } hf \ d \mu, \sem \text{for all } \ \  f \in L_1 (\mu).
	$$
\end{proposition}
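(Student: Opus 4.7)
The plan is to define $\Phi$ as the map sending each representable operator $T \in \mathcal{R}$ to the representing function $h \in L_\infty(\mu, Y)$ guaranteed by Definition \ref{d-R}, and then verify three things: (i) $h$ is essentially unique, so $\Phi$ is well-defined; (ii) $\Phi$ is linear; (iii) $\Phi$ is norm-preserving. The desired integral formula then holds by construction.

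First I would handle well-definedness. If $h_1, h_2 \in L_\infty(\mu, Y)$ both represent $T$, then $\int_A (h_1 - h_2)\,d\mu = T(\chi_A) - T(\chi_A) = 0$ for every $A \in \Sigma$. Since $h_1 - h_2$ is strongly measurable, Pettis's theorem guarantees it is essentially separably valued, and the standard identity theorem for Bochner integrals forces $h_1 = h_2$ almost everywhere. Linearity of $\Phi$ is immediate from the linearity of the Bochner integral in the integrand.

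Next I would establish the isometry. The upper bound $\|T\| \le \|h\|_\infty$ follows from the routine estimate
\[
\|T(f)\|_Y = \Bigl\| \int_\Omega h f \, d\mu \Bigr\|_Y \le \int_\Omega \|h(t)\|_Y\, |f(t)| \, d\mu \le \|h\|_{L_\infty(\mu, Y)} \|f\|_{L_1(\mu)}.
\]
For the reverse, I would fix $y^* \in B_{Y^*}$ and observe that $y^* \circ T$ is a scalar functional on $L_1(\mu)$ represented by the bounded function $t \mapsto \langle h(t), y^* \rangle \in L_\infty(\mu)$, so that
\[
\|T\| \ge \|y^* \circ T\|_{L_1(\mu)^*} = \operatorname*{ess\,sup}_t |\langle h(t), y^* \rangle|.
\]

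The main obstacle is to interchange $\sup$ over $y^*$ with $\operatorname{ess\,sup}$ over $t$ in order to recover $\operatorname*{ess\,sup}_t \|h(t)\|_Y$; a direct interchange is illegitimate because an uncountable union of null sets need not be null. The fix is to invoke Pettis's theorem a second time: strong measurability of $h$ forces its essential range into a separable subspace $Y_0 \subseteq Y$, and a Hahn-Banach argument produces a countable sequence $(y_n^*) \subset B_{Y^*}$ that norms $Y_0$. With a countable family, the interchange is valid, yielding
\[
\|T\| \ge \sup_n \operatorname*{ess\,sup}_t |\langle h(t), y_n^* \rangle| = \operatorname*{ess\,sup}_t \sup_n |\langle h(t), y_n^* \rangle| = \operatorname*{ess\,sup}_t \|h(t)\|_Y = \|h\|_\infty,
\]
which closes the isometry. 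I expect this countability reduction to be the only nontrivial point in the argument.
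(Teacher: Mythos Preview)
The paper does not supply its own proof of this proposition; it is quoted verbatim as \cite[Lemma III.4, p.~62]{DieUh} and used as a black box. Your argument is correct and is essentially the standard proof one finds in Diestel--Uhl: well-definedness via the vanishing of $\int_A(h_1-h_2)\,d\mu$ on all $A\in\Sigma$, the easy inequality $\|T\|\le\|h\|_\infty$ from the Bochner-integral bound, and the reverse inequality obtained by composing with functionals $y^*\in B_{Y^*}$ and reducing to a countable norming sequence through the essential separability furnished by the Pettis measurability theorem. The one point worth flagging as the genuine content --- the interchange of $\sup_{y^*}$ and $\operatorname*{ess\,sup}_t$ --- you have identified and handled correctly.
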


It is  known that $\mathcal{WC} (L_1 (\mu))$  is a subset of the representable operators into
$L_1(\mu)$ whenever $\mu$ is any finite measure (see for instance  \cite[Theorem III.12, p.
75]{DieUh}). We will write $\mathcal{R} (L_1(\mu))$ for the space of representable operators into
$L_1(\mu)$. Given $T \in \mathcal{L}(L_1 (\mu))$ and a measurable subset $A$ of $\Omega $, we
will denote by $T_{\vert A}$ the operator on $L_1 (\mu)$ given by $T _{\vert A} (f)= T( f
\chi_A)$ for all $f \in L_1 (\mu)$.

In \cite[Theorem 2.3]{ABGKM} it was proved that a subspace of $\mathcal{L} (L_1 (\mu), Y)$ that
contains the subspace of finite-rank operators and is contained in the space of representable
operators and that satisfies also an additional assumption  has the Bishop-Phelps-Bollob{\'a}s property
for operators whenever $Y$ has the so called AHSp, a property satisfied by $L_1 (\mu)$. Now we
will prove a parallel result for numerical radius for subspaces of  $\mathcal{L} (L_1(\mu))$. Of course,  such
proof is more involved since we have to approximate one pair of elements $(x,x^*)$ in the
product of $S_{L_1 (\mu)} \times  S_{(L_1 (\mu))^*}$ instead of one element in the unit sphere of
$L_1 (\mu)$.

In the  proof of the next result we will write $g(f)$ instead of $\int _{\Omega} g(t) f(t) \ d\mu$ for each
element $f \in L_1 (\mu)$ and $g \in L_\infty ( \mu)$.

\begin{theorem}
	\label{th-BPBP-num-radius-L1}
	Let $(\Omega ,  \mathcal{A} , \mu)$ be a finite measure space and  let $ \mathcal{M} $ be
	a subspace of $ \mathcal{L}( L_1 (\mu) )$ such that $ \mathcal{F}( L_1 (\mu) ) \subseteq
	\mathcal{M} \subseteq \mathcal{R}( L_1 (\mu) ) $. Assume also that  for each measurable subset
	$A$ of $ \Omega  $ and each $ T \in \mathcal{M} $ it is satisfied  $ T_{\vert A} \in
	\mathcal{M}$. Then $ \mathcal{M} $ has the  BPBp-$\nu$, and the function $ \eta $ satisfying   Definition \ref{d-BPBP-num-radius} is independent
	from the measure space and also from $ \mathcal{M} $.
\end{theorem}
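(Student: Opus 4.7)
My plan is to use the Riesz representation of $S$ to convert the numerical-radius hypothesis into a scalar integral condition, apply the given lemmas to localize on a measurable set where that integrand is aligned with $(x_0, x_0^*)$, and construct $T$ as a finite-rank modification of a restriction of $S$. Since $L_1(\mu)$ has numerical index $1$, $\|S\| = \nu(S) = 1$; let $h := \Phi(S) \in L_\infty(\mu, L_1(\mu))$ be the representation from Proposition \ref{iden-ope}, so $\|h\|_\infty = 1$. Write $f := x_0$, $g := x_0^*$ and $\phi(t) := g(h(t))$, so that the hypothesis becomes $|\int_\Omega \phi f \, d\mu| > 1 - \eta$, while $\int_\Omega fg\, d\mu = 1$ with $\|f\|_1 = \|g\|_\infty = 1$ forces $g(t) = \overline{\mathrm{sgn}(f(t))}$ a.e.\ on $\mathrm{supp}(f)$. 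Multiplying $S$ by a unimodular scalar, I may assume $\mathrm{Re}\int \phi f > 1 - \eta$.

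Applying Lemma \ref{re-fg-big} with parameter $\eta_1$ such that $\eta < \eta_1^2$ to $(f, \phi)$ yields a measurable $C \subseteq \mathrm{supp}(f)$ with $\mathrm{Re}(\phi(t) f(t)) > (1 - \eta_1)|f(t)|$ on $C$, $\mathrm{Re}\int_C \phi f > 1 - \eta_1$, and $\int_C |f| > 1 - \eta_1$. Lemma \ref{lem-z-mod-z} applied pointwise to $\phi(t)\overline{g(t)}$ gives $|\phi(t) - g(t)| < \sqrt{2\eta_1}$ and $\|h(t)\|_1 > 1 - \sqrt{2\eta_1}$ on $C$. Take $x_1 := f_1 := f\chi_C / \|f\chi_C\|_1$ and $x_1^* := g_1 := g$: then $\|f_1 - f\|_1 < 2\eta_1$, $\|g_1 - g\|_\infty = 0$, and $g_1(f_1) = 1$ since $\mathrm{supp}(f_1) \subseteq \mathrm{supp}(f)$.

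For the construction of $T$, use Bochner measurability of $h$ to partition $C$ into finitely many measurable pieces $C_1, \ldots, C_n$ (up to an exceptional set of arbitrarily small $|f|\mu$-measure, obtained via absolute continuity of integrals) so that on each $C_i$ the map $t \mapsto \overline{g(t)}h(t)$ is close in $L_1(\mu)$-norm to a constant $v_i \in B_{L_1(\mu)}$. By the localization step, $\mathrm{Re}\, g(v_i)$ is close to $1$, so the scalar Bishop-Phelps-Bollob\'{a}s theorem in $L_1(\mu)$ supplies $y_i \in S_{L_1(\mu)}$ with $g(y_i) = 1$ and $\|y_i - v_i\|_1$ small. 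Define
$$h_T(t) := \begin{cases} h(t), & t \notin C_1 \cup \cdots \cup C_n, \\ g(t)\, y_i, & t \in C_i, \end{cases}$$
and let $T$ be the operator represented by $h_T$. Then
$$T = S_{\vert \Omega \setminus (C_1 \cup \cdots \cup C_n)} + \sum_{i=1}^n (g\chi_{C_i}) \otimes y_i,$$
which lies in $\mathcal{M}$ by closure of $\mathcal{M}$ under restrictions combined with $\mathcal{F}(L_1(\mu)) \subseteq \mathcal{M}$. Since $|g(t)| = 1$ on $C$, $\|h_T(t)\|_1 = 1$ on $\bigcup_i C_i$ and $\le 1$ elsewhere, so $\|T\| \le 1$. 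Using $g(y_i) = 1$, a direct Fubini computation yields $g_1(T(f_1)) = 1$ up to a small residual error coming from the exception set; this residue is absorbed by a final small rank-one correction, giving $g_1(T(f_1)) = 1$ exactly and forcing $\|T(f_1)\|_1 = \|T\| = \nu(T) = 1$, so $T$ attains its numerical radius at $(f_1, g_1)$.

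The main obstacle is controlling $\|T - S\| = \|h_T - h\|_\infty$. On $\bigcup_i C_i$ this equals the maximum over $i$ of $\|y_i - \overline{g(t)}h(t)\|_1$, which by the triangle inequality is bounded by the partition-approximation error (made less than $\varepsilon/4$ by refining the partition) plus the pointwise Bishop-Phelps-Bollob\'{a}s error (bounded by a function of $\eta_1$ tending to zero). Together with the error from the rank-one correction, taking $\eta(\varepsilon)$ sufficiently small and the partition sufficiently fine makes $\|T - S\|$, $\|f_1 - f\|_1$, $\|g_1 - g\|_\infty$ all less than $\varepsilon$; since $\nu(T - S) \le \|T - S\|$, this verifies Definition \ref{d-BPBP-num-radius}. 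The function $\eta$ depends only on $\varepsilon$, independent of $(\Omega, \mathcal{A}, \mu)$ and of $\mathcal{M}$.
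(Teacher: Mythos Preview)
Your overall architecture --- pass to the kernel $h \in L_\infty(\mu,L_1(\mu))$, localize via Lemma~\ref{re-fg-big}, replace $h$ on a finite partition by constants so that the new operator is a restriction of $S$ plus a finite-rank piece, and read off membership in $\mathcal{M}$ --- matches the paper's strategy. The gap is in your choice $x_1^* := g_0$ together with the sentence ``the scalar Bishop--Phelps--Bollob\'as theorem in $L_1(\mu)$ supplies $y_i \in S_{L_1(\mu)}$ with $g(y_i)=1$ and $\Vert y_i-v_i\Vert_1$ small.'' The Bishop--Phelps--Bollob\'as theorem produces a \emph{new pair} $(y_i,g_i)$; it does not give a nearby point at which the \emph{fixed} functional $g$ attains its norm. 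In fact this one-sided statement is false in $L_1(\mu)$: take $\Omega=[0,2]$ with Lebesgue measure, $g=\chi_{[0,1]}+(1-\delta)\chi_{(1,2]}$, and $v=\chi_{(1,2]}$; then $\rea g(v)=1-\delta$ but every $y\in S_{L_1}$ with $g(y)=1$ is supported in $[0,1]$, hence $\Vert y-v\Vert_1\ge 1$. In your setting the vectors $v_i\approx\overline{g(t_i)}h(t_i)$ are arbitrary elements of $B_{L_1(\mu)}$ and can perfectly well have their mass on $\{|g_0|<1\}$, so nothing prevents this obstruction.

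This is exactly why the paper does \emph{not} keep $g_0$ fixed: it first replaces $g_0$ by a simple function $g_1$, and at the end (Step~5) replaces $g_1$ by $g_2=\dfrac{g_1}{|g_1|}\chi_A+g_1\chi_{\Omega\setminus A}$, forcing $|g_2|=1$ on the set $A=\{|g_1|\ge 1-\varepsilon^2/2^7\}$ which is shown to contain both the $D_k$'s carrying $f_3$ and the sets $P_k$ carrying the new kernel values $\varphi_k$. Only with this adjusted functional can one arrange $g_2(\varphi_k)=\gamma_k/|\gamma_k|$ exactly and hence $g_2(T_3(f_3))=1$. Your proof needs an analogous modification of $g$; once you allow that, the ``final small rank-one correction'' becomes unnecessary, and the vague treatment of the exceptional set in the partition can be replaced by the paper's cleaner device of a countable partition truncated to a finite one in $|f|$-measure.
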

\begin{proof}
	Let us fix  $  0 <\varepsilon < 1 $. We take $\eta  \bigl(= \eta (\varepsilon )\bigr)= \frac{\varepsilon^8}{
		2^{33}}$.  Assume that $T_0\in S_{\mathcal{M}}$, $f_0 \in S_{ L_1(\mu) }$ and $ {g_0} \in S_{L_\infty (\mu)}$
	satisfy $g_0(f_0) =1$ and $\vert g_0(T_0(f_0)) \vert > 1-\eta$.
	Let $\lambda _0 $ be a scalar with $\vert \lambda _0 \vert=1$ and such that $ \vert g_0(T_0(f_0)) \vert =\rea
	\lambda _0 g_0(T_0(f_0))$. By changing $T_0$  by $\lambda _0 T_0$  we may assume that $ \rea  g_0(T_0(f_0)) =
	\vert g_0(T_0(f_0)) \vert$. In view of  Proposition \ref{iden-ope} there is a function  $h_0 \in  S_{
		L_\infty (\mu, L_1 (\mu))}$ associated to the operator $T_0$.
	Since the proof is long we divided it into  five steps. \\
	\textbf{Step 1.}  In this step  we will approximate
	the pair of  functions $(f_0, g_0)$ by   a new pair $(f_1, g_1)$ such that $f_1$ and $g_1$ take a countable set of values and also there are subsets where $f_1$, $g_1$ are  constant  and   $h_0$ has small oscillation on these subsets.
	
	More concretely, we will show that there are functions $f_1 \in
	S_{L_1(\mu)}$ and $g_1 \in S_{L_\infty (\mu)}$  and a countable family $\{ D_k: k \in J\} \subset \Omega $ of
	pairwise disjoint measurable sets such that $\mu(D_k)> 0 $ for all $k\in J$,   $ \mu(\Omega\setminus \bigcup_{k \in J  } D_k) = 0$  and  such that  the following conditions are satisfied
	\begin{equation}
	\label{F-aprox}
	\Vert f_1 -f_0\Vert _1  < \frac{\varepsilon}{4},  \ \   \Vert g_1 -g_0\Vert _\infty  < \frac{\varepsilon}{4},
	\end{equation}
	\begin{equation}
	\label{f1-f0-g1-g0}
	\rea g_1(f_1)
	> 1 - \eta,  \ \  \rea g_1(T_0(f_1)) > 1 -\eta,
	\end{equation}
	\begin{equation}
	\label{f1-g1-constant}
	\text{for each} \  k \in J, \  f_1 ~\text{and}~ g_1 ~\text{are constant on} \ D_k
	\end{equation}
	\begin{equation}
	\label{osc-h0-Dk}
	\sup \{ \Vert  h_0 (s) - h_0 (t) \Vert  _1 : s, t \in D_k \}
	\le  \eta , \ \
	\forall k \in J,
	\end{equation}
	and
	\begin{equation}
	\label{norm-h0}
	1=\Vert h_0 \Vert _\infty = \sup \{ \Vert  h_0 (t)\Vert  _1 : t \in  \cup_{k \in J} D_k  \}.
	\end{equation}

	Since the set of simple functions is  dense in both $L_1(\mu)$ and $L_{\infty}(\mu)$, there are   simple
	functions $ f_1 \in S_{L_1(\mu)}$ and $g_1 \in S_{L_\infty(\mu)}$ satisfying \eqref{F-aprox} and \eqref{f1-f0-g1-g0}.
	
	On the other hand, by \cite[Theorem II.2, p. 42]{DieUh} there is a measurable subset $E_1$  of
	$\Omega$ such that $\mu(E_1)= 0 $ and $ h_0 ({\Omega\setminus E_1}) $ is a separable subset of
	$L_1(\mu)$. Suppose that  the set $\{y_i : i\in \mathbb{N}\}$ is dense in  $ h_0 ({\Omega\setminus E_1})$. Since $f_1$ and $g_1$ are simple functions, we can assume that $\text{Im}(f_1)=\{a_r : r=1, \ldots, n\}$ and $\text{Im}(g_1)=\{b_l : l=1, \ldots, m\}$. Now, for $i\in \mathbb{N}$, $r\in\{ 1, \ldots,n\}=N$ and $l\in \{1,\ldots, m\}=M$ we  consider  the following subsets of $\Omega $
	$$
	A_{(1, r , l)}=h_0^{-1}(B_{\frac{\eta}{2}}(y_1))\cap({\Omega\setminus E_1})\cap f_1^{-1}(a_r)\cap g_1^{-1}(b_l)
	$$
	and
	$$
	A_{(i, r, l)}= (h_0^{-1}(B_{\frac{\eta}{2}}(y_i))\setminus \cup_{e=1}^{i-1}h_0^{-1}(B_{\frac{\eta}{2}}(y_e)))\cap({\Omega\setminus E_1})\cap f_1^{-1}(a_r)\cap g_1^{-1}(b_l), \quad \forall i\geq 2.
	$$
	It is clear that the elements of the family $\{A_{(i, r, l)} : (i, r, l)\in \mathbb{N}\times N\times M\}$ are measurable  subsets of $\Omega$   and pairwise disjoint. Now, let $W=\{(i, r, l)\in  \mathbb{N} \times N\times M : \mu(A_{(i, r, l)})=0\}$ and $E_2=\bigcup_{(i, r, l)\in W} A_{(i, r, l)}$.  By  the definition of $W$ it is trivially satisfied that $E_2$ is measurable and   $\mu(E_2)=0$.
	On the other hand  there exists a measurable subset $E_3$  of $\Omega\setminus (E_1\cup E_2)$ such that $\mu(E_3)=0$ and $
	\Vert h \Vert _\infty = \sup \{ \Vert h(t) \Vert_1 : t \in \Omega \backslash E_3\}.$  Assume that $\{D_k : k\in J\}$ is the family of pairwise disjoint measurable subsets obtained by indexing the set $\{A_{(i, r, l)}\setminus E_3 : (i, r, l)\in (\mathbb{N}\times N\times M)\setminus W\}$.
	Then, we have that $\mu(D_k)> 0$ for all $k\in J$,   $ \mu(\Omega\setminus \bigcup_{k \in J  } D_k) = 0$ and also the family $\{D_k : k\in J\}$  satisfies  the conditions \eqref{f1-g1-constant}, \eqref{osc-h0-Dk} and \eqref{norm-h0}. Therefore, by \eqref{f1-g1-constant} there are sets of scalars $\{ \alpha _k : k \in J \}$ and $\{ \gamma  _k : k \in J \}$   such that
	\begin{equation}
	\label{f1-g1}
	f_1=\sum_{k \in J} \alpha_k \frac{\chi_{D_k}}{\mu(D_k)} , \ \  \sum_{k \in J} \vert \alpha _k \vert =1,\sem
	g_1=\sum_{k \in J} \gamma _k \chi_{D_k} , \ \   \vert \gamma _k \vert \le 1, \sem \forall k \in J.
	\end{equation}
	
	\textbf{Step 2.}  In this step we  will define another   simple function $f_2\in S_{L_1(\mu)}$ which is an approximation of $f_1$,  and can be expressed as a finite sum  instead of the countable sum appearing in the expression of $f_1$ given in \eqref{f1-g1}.\\
	
	By  \eqref{f1-g1}  and \eqref{f1-f0-g1-g0}	there is a finite subset $F$ of $J$  such that
	\begin{equation}
	\label{sum-F-alpha}
	\sum _{k \in F} | \alpha_k | > 1- \eta > 0, \seg   \rea  g_1\biggl(  \sum_{k \in F} \alpha_k
	\frac{\chi_{D_k}}{\mu(D_k)} \biggr)  > 1 - \eta,
	\end{equation}
	and also
	\begin{equation}
	\label{g1-T0-sum-F}
	\rea  g_1\biggl(T_0 \biggl(  \sum_{k \in F} \alpha_k \frac{\chi_{D_k}}{\mu(D_k)} \biggr) \biggr)
	> 1 -  \eta .
	\end{equation}
	
	For each $k \in F$ we put  $\beta _k = \frac{\alpha_k}{  \sum_{k \in F} \vert \alpha_k \vert } $ and define
	$f_2 = \sum_{k \in F} \beta_k \frac{\chi_{D_k}}{\mu(D_k)} $. In view of \eqref{sum-F-alpha} and
	\eqref{g1-T0-sum-F} we have that
	\begin{equation}
	\label{g1-f2}
	\rea  g_1(f_2)= \rea  g_1\biggl(  \sum_{k \in F} \beta_k \frac{\chi_{D_k}}{\mu(D_k)} \biggr)
	> 1 - \eta
	\end{equation}
	and
	\begin{equation}
	\label{g1-T0-sum-F-norm}
	\rea  g_1(T_0(f_2)) = \rea  g_1\biggl(T_0 \biggl(  \sum_{k \in F} \beta_k \frac{\chi_{D_k}}{\mu(D_k)}
	\biggr) \biggr)  > 1 -  \eta .
	\end{equation}

	Clearly  $f_2 \in S_{ L_1 (\mu) }$ and by \eqref{f1-g1},  \eqref{sum-F-alpha} we have that
	\begin{align}
	\label{f2-f1}
	\nonumber \Vert f_2 - f_1  \Vert _1 & =  \biggl \Vert  \sum_{ k \in  F} \beta_k \frac{\chi_{D_k}}{\mu(D_k)}
	-
	\sum_{k \in J} \alpha_k \frac{\chi_{D_k}} {\mu (D_k)} \biggr \Vert _1\\
	\nonumber
	& = \biggl \Vert  \sum_{ k \in  F} \beta_k \frac{\chi_{D_k}}{\mu(D_k)}  -
	\sum_{k \in F} \alpha_k \frac{\chi_{D_k}} {\mu (D_k)}   -   \sum_{k \in  J \backslash F} \alpha_k \frac{\chi_{D_k}} {\mu (D_k)}       \biggr \Vert _1\\
	&\le  \sum_{k \in F} |\beta_k - \alpha_k| + \sum_{ k \in  J \setminus F}
	|\alpha_k|
	%
	= 1 -  \sum_{k \in F} | \alpha_k| + \sum_{ k \in J \setminus F}
	|\alpha_k| \\
	\nonumber
	&=  2  \Bigl( 1 -  \sum_{k \in F} | \alpha_k|  \Bigr)
	%
	<  2\eta < \frac{\varepsilon}{4}.
	\end{align}
	
	\textbf{Step 3.} Now, we approximate   the function $h_0$ by a new one  $h_2$  such that for each $k \in F$  the new function is constant on each $D_k$.
	So we  also   approximate the operator  $T_0$ by a new one.\\
	For this aim   we choose an element  $t_k$ in $ D_k$, for any $k \in F $,  put $\psi_k  = h_0(t_k)\in L_1 (\mu) $ and define  $h_1 \in L_\infty(\mu, L_1 (\mu))$ by
	$$
	h_1 = h_0 \chi_ {\Omega \setminus (\bigcup_ {k \in F} D_k) } +
	\sum _{ k \in F} \psi_k  \chi_{D_k}.
	$$
	By \eqref{norm-h0} we have that $\Vert h_1 \Vert _\infty \le 1$.  If $T_1\in \mathcal{L} (L_1	(\mu))$ is the  operator associated to  $h_1$, then $T_1$ is the sum of ${T_0}_{\vert \Omega \setminus (\bigcup_ {k \in F} D_k) }$ and a finite-rank operator, so $T_1 \in B_{\mathcal{M}}$. By using   \eqref{osc-h0-Dk}, we
	clearly have
	\begin{align}
	\label{T1-T0}
	\Vert T_1 - T_ 0 \Vert   &=  \Vert  h_1 - h_0 \Vert _\infty \le \sup \{ \Vert  \psi_k  - h_0(t) \Vert _1  :  t \in D_k, k \in F \}  \\
	\nonumber
	&= \sup  \{ \Vert  h_0(t_k) - h_0(t) \Vert _1  :  t \in D_k, k \in F \} \le    \eta.
	\end{align}
	Since $\Vert T_0 \Vert=1$ we get that $ 0<1-\eta  \le  \Vert  T_1 \Vert \le 1$.   Now we define $ T_2=	\frac{T_1}{\Vert {T_1}\Vert} $ and so
	we have that
	$$
	\Vert T_2 - T_1 \Vert = 1 - \Vert T_1 \Vert \le  \eta .
	$$
	In view of the previous inequality and \eqref{T1-T0} we obtain that
	\begin{equation}
	\label{T2-T0}
	\Vert T_2 - T_0 \Vert \le \Vert T_2 - T_1 \Vert + \Vert T_1 - T_0 \Vert \le  2 \eta  <
	\frac{\varepsilon}{4}.
	\end{equation}
	From \eqref{g1-T0-sum-F-norm} and \eqref{T2-T0}   we get that
	\begin{align}
	\label{g1-T2-f2}
	\rea g_1(T_2 (f_2)) \ge  \rea g_1 ( T_0 (f_2)) -  \Vert T_2 - T_ 0 \Vert  > 1-3\eta  .
	\end{align}
	
	On the other hand, it is clear that
	\begin{align*}
	T_1 (f_2)  = \int _ \Omega  h_1 f_2 \ d \mu   = \int _{\Omega\setminus \bigcup_{k \in F} D_k}  h_1 f_2 \ d \mu   +
	\sum _ {k \in F}  \int _ {D_k} h_1 f_2 \ d\mu = \sum _ {k \in F}   \beta _k \psi_k.
	\end{align*}
	For simplicity,  for each $ k\in F $,  put $ \phi_k =\dfrac{\psi_k }{\Vert T_1 \Vert}$.  So we have 	that
	$$
	T_2(f_2) = \sum_{k\in F} \beta_k \phi_k .
	$$
	It is clear that $\phi_k \in B_{L_1(\mu)}$ for every $k \in F$.
	From \eqref{g1-f2} and \eqref{g1-T2-f2} we obtain that
	\begin{align*}
	\rea g_1  \biggl(  \sum _ {k \in F}\frac{ \beta_k}{2}  \biggl( \frac{\chi_{D_k}}{\mu (D_k)} + \phi_k \biggr)
	\biggr)  =
	\rea g_1 \biggl( \frac{f_2 + T_2(f_2)}{2} \biggr)
	>  1-  2 \eta  .
	\end{align*}
	
	\textbf{Step 4.} In this step   we will  obtain  approximations  $f_3$, $T_3$ of $f_2$ and $T_2$, respectively. We will check  in the final step that $T_3$ attains its norm at $f_3$, a necessary condition for our purpose. In fact $f_3$ and $T_3$ are the final approximations to $f_0$ and $T_0$.\\
	Define the set $G$ as follows
	$$
	G = \biggl\{ k \in  F : \rea     g_1 \biggl( \frac{\beta _k}{2} \biggl( \frac{\chi_{D_k}}{\mu
		(D_k)} + \phi_k \biggr) \biggr)  > \bigl( 1 -\sqrt{ 2 \eta } \bigr) |\beta_k| \biggr\}.
	$$
	In view of Lemma \ref{lema-compl-z-beta} we have that
	\begin{equation}
	\label{sum-G}
	\sum_{k \in G } | \beta_k | >  1 - \sqrt{2 \eta  } = 1 - \frac{\varepsilon^4}{2^{16}}.
	\end{equation}
	It is  immediate  that
	$$
	\rea  \beta_k g_1 \biggl( \frac{\chi_{D_k}}{\mu ({D_k})} \biggr) > \Bigl( 1 - 2 \sqrt{2 \eta} \Bigr) |
	\beta_k | = \Bigl( 1 - \frac{ \varepsilon^4}{2^{15}} \Bigr) | \beta_k |, \sem \forall k \in G.
	$$
	So, for each  $ k \in G $  we have
	\begin{align*}
	\rea  \beta_k \gamma_k= \rea  \beta_k g_1 \biggl( \frac{\chi_{D_k}}{\mu(D_k)}\biggr) > \Bigl( 1 - \frac{ \varepsilon^4}{2^{15}} \Bigr) | \beta_k |
	\ge  \Bigl( 1 - \frac{ \varepsilon^4}{2^{15}} \Bigr) | \beta_k  \gamma _k|.
	\end{align*}
	Hence, we obtain that $ \beta_k\neq 0$  for $k \in G$ and  also that
	\begin{equation}
	\label{gamma-k-big}
	\vert \gamma _k \vert  > 1 - \frac{\varepsilon^4}{ 2^{15}}>0, \seg \forall k \in G.
	\end{equation}
	By using also  Lemma \ref{lem-z-mod-z} we get
	$$
	| \beta_k \gamma_k - | \beta_k \gamma_k | | < \frac{\varepsilon^2}{2^7} |\beta_k \gamma_k |.
	$$
	Hence,
	\begin{equation}
	\label{beta-k-gamma-k}
	\biggl \vert  \beta_k -  \frac { \vert \beta_k \gamma_k \vert}{\gamma_k} \biggr \vert  <
	\frac{\varepsilon^2}{2^7} \vert \beta_k  \vert   ~~\text{and} ~~ \biggl \vert  \gamma_k - \frac { \vert
		\beta_k \gamma_k \vert}{\beta_k} \biggr \vert  <  \frac{\varepsilon^2}{2^7}  \vert \gamma_k \vert, \sem
	\forall k \in G,
	\end{equation}
	so
	\begin{equation}
	\label{gamma-k-norm}
	\biggl \vert \frac{\gamma_k}{\vert \gamma_k \vert}  -  \frac{\vert \beta_k \vert }{\beta_k} \biggr \vert
	<  \frac{\varepsilon^2}{2^7},  \seg \forall k \in G .
	\end{equation}

	The element  $ f_3 $  given by
	$$
	f_3 =  \frac{1}{ \sum_{k \in G}  \vert \beta _k \vert} \sum_{ k \in G} \frac{ \vert \beta _k \gamma_k \vert
	}{ \gamma _k} \frac{\chi_{D_k}}{\mu(D_k)}
	$$
	belongs to the unit sphere of $L_1 (\mu)$.  Now, by using  \eqref{sum-G} and  \eqref{beta-k-gamma-k} we get that
	
	\begin{align}
	\label{f3-f2}
	\nonumber
	\Vert  f_3 - f_2    \Vert _1 &= \biggl \Vert  \frac{1}{ \sum_{k \in G}  \vert \beta _k \vert} \sum_{ k \in G} \frac{ \vert
		\beta _k \gamma_k \vert  }{ \gamma _k} \frac{\chi_{D_k}}{\mu(D_k)}-
	\sum_{k \in F}   \beta _k  \frac{\chi_{D_k}}{\mu(D_k)} \biggr \Vert_1 \\
	\nonumber
	&=  \biggl \Vert  \frac{1}{ \sum_{k \in G}  \vert \beta _k \vert} \sum_{ k \in G} \frac{ \vert \beta _k
		\gamma_k \vert  }{ \gamma _k} \frac{\chi_{D_k}}{\mu(D_k)}-
	\sum_{k \in G}   \beta _k  \frac{\chi_{D_k}}{\mu(D_k)} -
	\sum_{k \in F \backslash  G}   \beta _k  \frac{\chi_{D_k}}{\mu(D_k)} \biggr \Vert_1  \\
	&\le
	\sum_{ k \in G}  \biggl \vert   \frac{1}{\sum _{ k \in G } \vert \beta _k \vert}
	\frac{ \vert \beta _k \gamma_k \vert }{  \gamma _k} - \beta _k \biggl \vert +
	\sum_{k \in F \backslash  G}   \vert \beta _k \vert
	\\
	\nonumber
	&\le
	\sum_{ k \in G}  \biggl \vert   \frac{1}{\sum _{ k \in G } \vert \beta _k \vert}
	\frac{ \vert \beta _k \gamma_k \vert }{  \gamma _k} - \frac{ \vert \beta _k \gamma_k \vert }{  \gamma _k}
	\biggl \vert +
	\sum_{ k \in G}  \biggl \vert   \frac{ \vert \beta _k \gamma_k \vert }{  \gamma _k} -  \beta _k \biggr \vert
	+
	\sum_{k \in F \backslash  G}   \vert \beta _k \vert   \\
	\nonumber
	&\le
	1- \sum _{k \in G} \vert \beta _k \vert +   \sum _{k \in G}  \frac{\varepsilon^2}{2^7} \vert \beta _k \vert +
	\sum_{k \in F \backslash  G}   \vert \beta _k \vert\\
	\nonumber
	&\le
	2 \Bigl( 1- \sum _{k \in G} \vert \beta _k \vert \Bigr)  +   \frac{\varepsilon^2}{2^7}  \le   \frac{\varepsilon}{8}.
	\end{align}
	In view of \eqref{F-aprox},  \eqref{f2-f1} and \eqref{f3-f2},  we obtain that
	\begin{align}
	\label{f3-f0}
	\Vert  f_3 - f_0 \Vert _1 &\le   \Vert  f_3 - f_2 \Vert _1+ \Vert  f_2 - f_1 \Vert _1 + \Vert  f_1 - f_0 \Vert _1 <     \frac{\varepsilon}{8} +  \frac{\varepsilon}{4} +  \frac{\varepsilon}{4} < \varepsilon.
	\end{align}

	Now notice obviously  that
	$$
	\rea  \beta_k g_1(\phi_k ) > \bigl( 1 - 2 \sqrt{2\eta } \bigr)  | \beta_k |
	> \Bigl( 1 - \frac{ \varepsilon^4}{ 2^{14} } \Bigr)  | \beta_k |
	, \seg \forall k \in G .
	$$

	For each $ k \in G $, define $P_k $ as follows
	$$
	P_k  = \Bigl\{ t \in \Omega :   \rea    \beta_k g_1(t) \phi_k (t)  >
	\Bigl( 1 - \frac{ \varepsilon^2}{ 2^{7} } \Bigr)
	\vert \beta_k {\phi_k  (t)} \vert  \Bigr\}.
	$$

	Clearly $P_k$ is a measurable set.  According to Lemma  \ref{re-fg-big},  for each $ k \in G $
	we have
	$$
	\rea  \int_{P_k } \beta_k g_1 \phi_k \ d \mu >
	\Bigl( 1 - \frac{ \varepsilon^2}{ 2^{7} } \Bigr)  \vert \beta _k \vert,
	$$
	so
	\begin{equation}
	\label{int-Pk}
	\int_{P_k} |\phi_k | d \mu   > 1-  \frac{\varepsilon^2}{2^7} > 0.
	\end{equation}
	
	Let us fix $k \in G$ and $t \in P_k$. Notice that $\beta _k g_1(t) \ne 0$. By Lemma
	\ref{lem-z-mod-z} it follows
	$$
	\bigl \vert  \beta_k g_1(t)  \phi_k (t) -  \vert  \beta_k g_1(t) \phi_k (t) \vert \; \bigr \vert < \frac{
		\varepsilon}{2^3}    \vert  \beta_k g_1(t) \phi_k (t) \vert,
	$$
	so
	\begin{equation}
	\label{phi-k-beta-k-g1}
	\biggl \vert   \phi_k  (t) -  \frac{  \vert \beta_k  g_1(t) \phi_k (t) \vert  } { \beta_k g_1(t)
	}    \biggr \vert   <  \frac{ \varepsilon}{2^3} \vert \phi_k (t) \vert , \seg \forall k \in G, t
	\in P_k.
	\end{equation}
	
	For each $ k \in G $  we   can define the element  $ \varphi_k $ in $L_1 (\mu)$ by
	$$
	\varphi_k  =  \frac{\gamma_k}{ \vert \gamma_k \vert }  \frac{\vert \phi_k  \vert}{ \int_{P_k } \vert \phi_k
		\vert \ d \mu}   \frac{ \vert g_1 \vert}{g_1} \chi_{P_k} .
	$$
	It is immediate that $ \varphi_k  \in S_{ L_1 (\mu)}$.    From   \eqref{int-Pk} and \eqref{phi-k-beta-k-g1}, for  each $ k \in G $ we have
	\begin{align}
	\label{varphi-k-phi-k}
	\nonumber
	\Vert      \varphi_k  - \phi_k  \Vert _1&\le \Vert \varphi_k  - \phi_k
	\chi_{P_k }\Vert _1 +  \Vert  \phi_k  \chi_{\Omega \setminus P_k  }  \Vert _1  \\
	\nonumber
	& < \biggl \Vert  \varphi_k  -
	\phi_k  \chi_{P_k} \biggr \Vert _1  +   \frac{ \varepsilon^2}{2^7} \\
	\nonumber &\le \biggl \Vert  \varphi_k  -
	\frac{\gamma_k}{ \vert \gamma_k  \vert} \vert \phi_k \vert   \frac{\vert g_1 \vert}{g_1} \chi
	_{P_k} \biggr \Vert _1  +
	\biggl \Vert \frac{\gamma_k}{ \vert \gamma_k  \vert}  \vert  \phi_k \vert    \frac{\vert g_1
		\vert }{g_1} \chi _{P_k} - \frac{ \vert \beta _k \vert}{ \beta _k}  \vert  \phi_k \vert
	\frac{\vert g_1 \vert}{g_1} \chi _{ P_k} \biggr \Vert _1    \\
	\nonumber &+ \biggl \Vert  \frac{ \vert \beta _k \vert}{ \beta _k} \vert  \phi_k  \vert
	\frac{\vert g_1 \vert}{g_1} \chi _{ P_k} -  \phi_k   \chi_{P_k} \biggr \Vert _1 +  \frac{ \varepsilon^2}{2^7} \\
	&\le \biggl \Vert  \varphi_k  -
	\frac{\gamma_k}{ \vert \gamma_k  \vert} \vert  \phi_k \vert    \frac{\vert g_1 \vert}{g_1} \chi
	_{P_k} \biggr \Vert _1 + \biggl \vert  \frac{\gamma _k}{ \vert \gamma _k \vert }  - \frac{ \vert
		\beta _k \vert } {\beta_k} \biggr \vert +  \frac{ \varepsilon}{2^3} +   \frac{
		\varepsilon^2}{2^7}
	\\
	\nonumber &\le \biggl \Vert  \varphi_k  -
	\frac{\gamma_k}{ \vert \gamma_k  \vert} \vert  \phi_k \vert  \frac{\vert g_1 \vert}{g_1} \chi
	_{P_k} \biggr \Vert _1 + \frac{ \varepsilon}{4}
	\ \  \text{(by \eqref{gamma-k-norm})}\\
	%
	\nonumber &=  \biggl \Vert   \frac{\gamma _k}{ \vert   \gamma _k \vert }   \frac{\vert \phi_k
		\vert}{ \int _{P_k} \vert \phi_k  \vert \ d \mu }  \frac{ \vert g_1\vert}{g_1} \chi_{P_k}-
	\frac{\gamma_k}{ \vert \gamma_k \vert} \vert  \phi_k \vert   \frac{\vert g_1 \vert}{g_1} \chi
	_{P_k} \biggr \Vert _1 + \frac{ \varepsilon}{4}\\
	%
	\nonumber &=   1- \int _{P_k} \vert \phi_k  \vert \ d \mu   + \frac{ \varepsilon}{4} <   \frac{ \varepsilon^2}{2^7} +   \frac{  \varepsilon}{4}  < \frac{\varepsilon}{2}.
	\end{align}
	
	Let the function $h_3$ be defined as follows
	$$
	h_3 = \frac{ h_1 }{ \Vert h_1 \Vert _\infty}  \chi_{ \Omega \setminus \bigcup_{ k \in G} D_k}  + \sum_{k \in
		G } \varphi_k  \chi_{D_k}.
	$$
	It is easy to see that $h_3$ belongs to the unit sphere of $L_\infty (\mu, L_1 (\mu))$. Let $T_3 \in  S_{\mathcal{L} (L_1
		(\mu))}$ be the operator associated to the function $h_3$ in view of Proposition \ref{iden-ope}.  Since $G$
	is a finite set, $\mathcal{F} (L_1 (\mu))  \subset \mathcal{M}$ and $T_1 \in \mathcal{M}$, by using the
	assumptions on $\mathcal{M}$ we know that $T_3 \in S_{\mathcal{M}}$.
	
	We also have  that
	\begin{align*}
	\Vert T_3 - T_2 \Vert &= \biggl\Vert  h_3 - \frac{h_1} {\Vert h_1 \Vert} _\infty \biggr \Vert _\infty \\
	& =    \biggl\Vert  h_3 \chi _{ \Omega \backslash (\cup_{ k \in G} D_k) } +
	\sum_{ k \in G} h_3 \chi_{D_k}
	- \frac{h_1} {\Vert h_1 \Vert} _\infty \chi _{ \Omega \backslash (\cup_{ k \in G} D_k) }
	- \sum _{k \in G} \frac{h_1} {\Vert h_1 \Vert}
	_\infty\chi_{D_k}  \biggr \Vert _\infty  \\
	& =    \biggl\Vert   \frac{h_1} {\Vert h_1 \Vert} _\infty   \chi _{ \Omega \backslash (\cup_{ k \in G} D_k) }
	+
	\sum_{ k \in G}  \varphi_k  \chi_{D_k}
	- \frac{h_1} {\Vert h_1 \Vert} _\infty \chi _{ \Omega \backslash (\cup_{ k \in G} D_k )}
	- \sum _{k \in G} \phi_k  \chi_{D_k}  \biggr \Vert _\infty  \\
	&=  \Bigl\Vert    \sum_{ k \in G}  \bigl( \varphi_k-  \phi_k \bigr)  \ \chi_{D_k}  \Bigr \Vert_\infty = \sup_{ k \in G} \bigl \Vert  \varphi_k -  \phi_k  \bigr \Vert _1   \le   \frac{\varepsilon}{2}  \sem \text{(by \eqref{varphi-k-phi-k})}.
	\end{align*}

	By the previous inequality and \eqref{T2-T0} we obtain
	\begin{equation}
	\label{T3-T0}
	\Vert  T_3 - T_0  \Vert  \le     \Vert  T_3 - T_2  \Vert +  \Vert  T_2 - T_0  \Vert  < \varepsilon.
	\end{equation}
	
	\textbf{Step 5.} Finally, we  are going to find an approximation of $g_1$ and complete our proof.\\
	
	We put $A= \bigl\{ t \in \Omega : \vert g_1 (t) \vert \ge 1 - \frac{\varepsilon^2}{2^7} \bigr\}$
	and let the function $g_2 $  be defined by $ g_2 = \dfrac{g_1}{\vert g_1 \vert} \chi_A + g_1 \chi
	_{ \Omega \backslash A}$. Since $g_1 \in S_{ L_\infty (\mu)}$, we have that    $g_2 \in S_{
		L_\infty (\mu)}$. It is also clear that
	\begin{equation}
	\label{g2-g1}
	\Vert g_2- g_1 \Vert _ \infty  \le \frac{\varepsilon^2}{2^7}.
	\end{equation}
	
	By using \eqref{F-aprox} and \eqref{g2-g1}  we also have that
	\begin{equation}
	\label{g2-g0}
	\Vert g_2- g_0 \Vert _\infty \le \Vert g_2- g_1 \Vert _\infty +  \Vert  g_1 - g_0 \Vert _\infty \le
	\frac{\varepsilon^2}{2^7}+ \frac{\varepsilon}{4} < \varepsilon.
	\end{equation}

	By \eqref{gamma-k-big} we know that   $\vert \gamma _k \vert > 1- \frac{\varepsilon^4}{ 2^{15}}$ for each $k
	\in G$. Since $G \subset J$, in view of  \eqref{f1-g1},  the restriction of $g_1$ to $D_k$ coincides with
	$\gamma _k$ and so $D_k\subset A$ for all $k\in G$. Hence,
	$$
	{g_2}_{\vert D_k} = \dfrac{\gamma _k}{\vert \gamma _k \vert } , \seg \forall
	k \in G.
	$$
	Therefore,  we deduce that
	\begin{align}
	\label{g2-f3}
	\nonumber
	g_2 (f_3)&= g_2 \biggl( \frac{1}{ \sum_{k \in G}  \vert \beta _k \vert}
	\sum_{ k \in G} \frac{ \vert \beta _k \gamma_k \vert  }{
		\gamma _k} \frac{\chi_{D_k}}{\mu(D_k)}\biggr)  \\
	& =    \frac{1}{ \sum_{k \in G}  \vert \beta _k \vert}   \sum_{ k \in G} \frac{ \vert \beta _k \gamma_k \vert
	}{
	\gamma _k} \frac{1}{\mu(D_k)}  g_2 \bigl( \chi_{D_k} \bigr)  \\
\nonumber
&=    \frac{1}{ \sum_{k \in G}  \vert \beta _k \vert}   \sum_{ k \in G} \frac{ \vert \beta _k
	\gamma_k \vert }{
	\gamma _k}   \frac{ \gamma _k }{ \vert \gamma _k \vert }   =1.
\end{align}

For each $k \in G$, from the  definition of $P_k$ and $A$, we deduce that $P_k \subset A$, so
\begin{equation}
\label{g2-varphi-k}
g_2(   \varphi_k ) = \int _{P_k} \frac{\gamma _k}{ \vert \gamma _k \vert}  \frac{\vert \phi_k
	\vert }{ \int _{P_k} \vert \phi_k \vert \ d  \mu } \ d \mu = \frac{\gamma _k }{\vert \gamma _k
	\vert }.
\end{equation}
Since
$$
T_3(f_3) = \int_{\Omega} h_3 f_3\ d \mu  =  \frac{1}{ \sum _{ k \in G} \vert \beta _k \vert} \sum_{k \in G}
\frac{ \vert \beta _k \gamma _k \vert }{\gamma_k} \varphi_k,
$$
by using 	\eqref{g2-varphi-k} we have that
\begin{align}
\label{g2-T3-f3}
\nonumber
g_2(T_3(f_3))  & =  \frac{1}{ \sum _{ k \in G} \vert \beta _k \vert} \sum_{k \in G} \frac{ \vert
	\beta _k \gamma _k \vert }{\gamma_k}
g_2 \bigl( \varphi_k \bigr)  \\
& =  \frac{1}{ \sum _{ k \in G} \vert \beta _k \vert} \sum_{k \in G} \frac{ \vert \beta _k \gamma
	_k \vert }{\gamma_k} \frac{\gamma _k}{ \vert \gamma _k \vert} = 1.
\end{align}

We  have shown that there are   elements  $ T_3 \in S_{\mathcal{M}}$, $ f_3 \in S_{L_1 (\mu)}$ and $ g_2 \in
S_{L_\infty (\mu)} $ that in view of \eqref{f3-f0}, \eqref{T3-T0},   \eqref{g2-g0}, \eqref{g2-f3} and
\eqref{g2-T3-f3} satisfy
$$
\Vert  T_3 - T_0  \Vert  < \varepsilon , \seg \Vert  f_3 - f_0  \Vert_1   < \varepsilon ,\seg
\Vert g_2 - g_0 \Vert_\infty  < \varepsilon
$$
and also
$$
g_2 (f_3)  =  g_2 ( T_3(f_3) )  = 1.
$$
So  we showed that $ \mathcal{M} $  has the  BPBp-$\nu$  with the function $ \eta $
given by
\linebreak[4]
$ \eta (\varepsilon) = \dfrac{\varepsilon^8}{2^{33}}$.
\end{proof}

In case that $\mu$ is a  $\sigma$-finite measure, there is a finite measure  $\zeta$  and a  linear isometry $\Phi$ from $L_1
(\mu)$ onto $L_1 (\zeta)$.
From this fact we deduce the following result which generalizes Theorem \ref{th-BPBP-num-radius-L1}  for some well-known classes of operators.

\begin{corollary}
	\label{cor-L1}
	Let $ ( \Omega , \Sigma , \mu   )  $ be a $\sigma$-finite measure space. The  following
	subspaces of $ \mathcal{L}  ( L_1 (\mu))  $ have the  BPBp-$\nu$ and the function $\eta $ satisfying Definition \ref{d-BPBP-num-radius} is
	independent from the  measure space.
	\begin{enumerate}
		\item[1)]  The subspace  of all finite-rank operators  on $L_1 (\mu) $.
		\item[2)]  The subspace  of all compact operators  on $L_1 (\mu) $.
		\item[3)]  The subspace  of all weakly compact operators  on $L_1 (\mu) $.
	\end{enumerate}
	In case that $\mu $ is finite, then the subspace   of all   representable operators  on  $L_1
	(\mu)$ also has the BPBp-$\nu$.
\end{corollary}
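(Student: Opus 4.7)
The plan is to reduce the $\sigma$-finite case to the finite one and then apply Theorem \ref{th-BPBP-num-radius-L1} to each of the four subspaces. For the reduction, choose a strictly positive $w\in L_1(\mu)$ with $\int_\Omega w\,d\mu = 1$ (available by $\sigma$-finiteness) and set $\zeta = w\mu$, a finite measure on $(\Omega,\Sigma)$. The map $\Phi:L_1(\mu)\to L_1(\zeta)$ given by $\Phi(f)=f/w$ is a surjective linear isometry whose induced map $T\mapsto \Phi T \Phi^{-1}$ is an isometric algebra isomorphism of $\mathcal{L}(L_1(\mu))$ onto $\mathcal{L}(L_1(\zeta))$; it preserves the numerical radius and sends $\mathcal{F}$, $\mathcal{K}$, $\mathcal{WC}$ onto their counterparts on $L_1(\zeta)$. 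Transporting a pair $(f,g)\in S_{L_1(\mu)}\times S_{L_\infty(\mu)}$ via $\Phi$ and $(\Phi^*)^{-1}$ preserves the constraint $g(f)=1$, so the BPBp-$\nu$ for these classes over $L_1(\mu)$ is equivalent, with the same modulus $\eta$, to the BPBp-$\nu$ over $L_1(\zeta)$.

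It then suffices to verify the three hypotheses of Theorem \ref{th-BPBP-num-radius-L1} for each $\mathcal{M}\in\{\mathcal{F}(L_1(\zeta)),\mathcal{K}(L_1(\zeta)),\mathcal{WC}(L_1(\zeta)),\mathcal{R}(L_1(\zeta))\}$ with $\zeta$ finite; the last of these simultaneously supplies the final sentence of the corollary. The containment $\mathcal{F}(L_1(\zeta))\subseteq\mathcal{M}$ is immediate in each case. The containment $\mathcal{M}\subseteq\mathcal{R}(L_1(\zeta))$ is trivial for $\mathcal{F}$ and $\mathcal{R}$, and for $\mathcal{K}$ and $\mathcal{WC}$ it follows from the chain $\mathcal{F}\subset\mathcal{K}\subset\mathcal{WC}\subset\mathcal{R}$, where the last inclusion is the Diestel--Uhl result \cite[Theorem III.12, p.~75]{DieUh} recalled just before Theorem \ref{th-BPBP-num-radius-L1}. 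For the closure under $T\mapsto T_{\vert A}$, I would use the factorization $T_{\vert A}=T\circ M_{\chi_A}$, where $M_{\chi_A}:L_1(\zeta)\to L_1(\zeta)$ is multiplication by $\chi_A$ and has norm at most one: the operator ideals $\mathcal{F}$, $\mathcal{K}$, $\mathcal{WC}$ are stable under right composition with bounded operators, and if $T\in\mathcal{R}(L_1(\zeta))$ has representation $h\in L_\infty(\zeta,L_1(\zeta))$, then $T_{\vert A}$ is represented by $h\chi_A\in L_\infty(\zeta,L_1(\zeta))$.

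With these three hypotheses verified, Theorem \ref{th-BPBP-num-radius-L1} delivers the BPBp-$\nu$ for each of the four subspaces with the universal modulus $\eta(\varepsilon)=\varepsilon^8/2^{33}$, independent of the measure and of the particular subspace. The argument is short because essentially everything is absorbed into Theorem \ref{th-BPBP-num-radius-L1}; the only point that requires a moment's care is checking that the BPBp-$\nu$ transfers faithfully across the isometry $\Phi$, since the definition concerns a pair in $S_{L_1}\times S_{L_\infty}$ subject to a normalization condition rather than a single vector in a unit sphere.
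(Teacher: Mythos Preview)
Your proposal is correct and follows essentially the same route as the paper: first verify the hypotheses of Theorem~\ref{th-BPBP-num-radius-L1} for each of the four classes when the measure is finite, then transfer the result to the $\sigma$-finite case via a surjective isometry $\Phi:L_1(\mu)\to L_1(\zeta)$ and the conjugation $T\mapsto \Phi T\Phi^{-1}$. Your explicit choice $\zeta=w\mu$, $\Phi(f)=f/w$ and the factorization $T_{\vert A}=T\circ M_{\chi_A}$ are slightly more concrete than the paper's presentation (the paper argues $T_{\vert A}(B_{L_1})\subset T(B_{L_1})$ instead), but the substance is the same.
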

\begin{proof}
	Assume first  that $\mu$ is a finite measure.  It is known that   $\mathcal{F}(L_1(\mu)) \subset \mathcal{K}(L_1(\mu)) \linebreak[4] \subset \mathcal{WC}(L_1(\mu))\subset \mathcal{R}(L_1(\mu))$ and  $T_{\vert A}(B_{L_1(\mu)})\subset T(B_{L_1(\mu)})$ for each  $T \in {\mathcal{L}} (L_1 (\mu))$ and every measurable subset
	$A$ of $ \Omega$. Also, it is clear that $T_{\vert A}\in \mathcal{R}(L_1(\mu))$ for any $T\in \mathcal{R}(L_1(\mu))$  and every measurable subset
	$A$ of $ \Omega$. Therefore, the spaces $\mathcal{F}(L_1(\mu))$, $\mathcal{K}(L_1(\mu))$,  $\mathcal{WC}(L_1(\mu)) $ and $ \mathcal{R}(L_1(\mu))$ satisfy the assumptions of Theorem \ref{th-BPBP-num-radius-L1}, and so the above statements hold in case that $\mu$ is finite.
	
	Now, let $\mu$ be a  $\sigma$-finite measure. We will show that the space
	$\mathcal{F}(L_1(\mu)) $ satisfies the BPBp-$\nu$.
	There is a finite measure $\zeta$  and a  surjective linear isometry  $\Phi$ from $L_1(\mu)$ into $L_1 (\zeta)$.   The mapping $\Phi$ induces a surjective linear isometry from
	$\mathcal{F}(L_1(\mu)))$ into $\mathcal{F}(L_1(\zeta)))$ given by  $T \mapsto \Phi \circ T \circ \Phi^{-1}$.   Since $\Phi$  is an isometry, it follows that
	$\nu (T) = \nu ( \Phi \circ T \circ \Phi^{-1})$ for every $T \in \mathcal{F}(L_1(\mu))$.
	On the other hand, it is satisfied that $(f,g) \in \Pi (L_1 (\mu))$ if and only if
	$(\Phi(f), (\Phi^{-1}) ^t (g) ) \in \Pi (L_1 (\zeta))$.  Also $(\Phi^{-1})^t(g)( \Phi \circ T \circ \Phi^{-1}(\Phi(f)))=g(T(f))$ for every $T \in \mathcal{F}(L_
	1(\mu)))$.
	Since   $\mathcal{F}(L_1(\zeta))$ has the BPBp-$\nu$ we deduce the same property for $\mathcal{F}(L_1(\mu))$.
	
	The  proofs of the statements  2) and 3)    are  analogous.
\end{proof}

\vskip 5mm

\vspace{3mm}

{\bf Acknowledgements.}
The authors would like to thank
the  reviewer for  valuable comments. The research work of the third author was done during her visit to  University of Granada. She thanks the Department of Mathematical Analysis and  the International Welcome Center of University of Granada,  and specially wishes to thank Prof. Mar\'{\i}a D. Acosta, for kind hospitality.

\newpage

\end{document}